\DeclareMathOperator{\Id}{Id}
\DeclareMathOperator{\SL}{SL}
\numberwithin{equation}{section}
\theoremstyle{plain} %% This is the default, anyway
\newtheorem{thm}[equation]{Theorem}
\newtheorem{con}[equation]{Conjecture}
\newtheorem{lem}[equation]{Lemma}
\theoremstyle{definition}
\newtheorem{defn}[equation]{Definition}
\theoremstyle{remark}
\begin{document}

%%% In the title, use a double backslash "\\" to show a linebreak:
%%% Use one of the following two forms:
%%% \title{Text of the title}
%%% or
%%% \title[Short form for the running head]{Text of the title}
\title{Index of the Kontsevich-Zorich monodromy of origamis in $\mathcal{H}(2)$}

%%% If there are multiple authors, they're described one at a time:
%%% First author: \author{} \address{} \curraddr{} \email{} \thanks{}
%%% Second author: \author{} \address{} \curraddr{} \email{} \thanks{}
%%% Third author: \author{} \address{} \curraddr{} \email{} \thanks{}
\author{Pascal Kattler}

%%% In the address, show linebreaks with double backslashes:
\address{}

%%% Current address is optional.
% \curraddr{}

%%% Email address is optional.
% \email{}

%%% If there's a second author:
% \author{}
% \address{}
% \curraddr{}
% \email{}

%%% To include an abstract, uncomment the following two lines and type
%%% the abstract in between them:
% \begin{abstract}\label{IntNumb}
% \end{abstract}

\maketitle

%%% To include a table of contents, uncomment the following line:
% \tableofcontents

\begin{abstract}
The Kontsevich-Zorich monodromy of an origami is the image of the action of the Veech group on the non tautological part of the homology. In this paper we make some progress to show, that for origamis in the stratum $\mathcal{H}(2)$ the index of the Kontsevich-Zorich monodromy in $SL_2(\mathbb{Z})$ is either 1 or 3.
\end{abstract}

\section{Introduction}
In this article we show most parts of a conjecture from \cite{bonnafoux2022arithmeticity}, namely that the index of the Kontsevich-Zorich monodromy of  primitive  origamis of degree $d$ in the stratum $\mathcal{H}(2)$ is either 1 or 3 in $\SL_2(\mathbb{Z})$. Hubert and Lelièvre showed in \cite{MR2214127}, that there are two $\SL_2(\mathbb{Z})$ orbits $\mathcal{A}_d$ and $\mathcal{B}_d$ if the degree $d$ is odd and one $\SL_2(\mathbb{Z})$-orbit, if the degree is even, distinguished by their HLK-invariant. Furthermore each orbit has as representative an \textbf{L-origamis} $L(n,m)$. This is an L-shaped origami as in Figure \labelcref{ZylinderDecom2}, where opposite edges are glued, $n$ is the number of squares in horizontal direction and $m$ in vertical direction. In the even case a representative is $L(n, m)$, where $n$ is even and $m$ is odd (or reserved). In the odd case representatives are $L(n,m)$, where $m$ and $n$ are even for $\mathcal{A}_d$ and $L(n,m)$, where $m$ and $n$ are odd for $\mathcal{B}_d$. So we will show the following Theorem.
\begin{thm}\label{main}
Let $\mathcal{O}$ be an origami of degree $d$ and genus 2 and $\Gamma \subseteq \SL_2(\mathbb{Z})$ the Kontsevich-Zorich monodromy of $\mathcal{O}$.
\begin{enumerate}
\item The index of $\Gamma$ in $\SL_2(\mathbb{Z})$ is at most 3, if $d$ is even. 
%\item The index of the  if $d$ in $\SL_2(\mathbb{Z})$ is 3, if $d$ is odd and $\mathcal{O}$ has 3 integer Weierstraß points. 
\item The index of $\Gamma$ in $\SL_2(\mathbb{Z})$ is 1, if $d$ is odd and $\mathcal{O}$ lies in $\mathcal{A}_d$. 
\end{enumerate}
\end{thm}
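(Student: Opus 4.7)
The plan is to reduce to a single L-origami representative $L(n,m)$ of each orbit (since the Kontsevich-Zorich monodromy is well-defined up to $\SL_2(\mathbb{Z})$-conjugacy on orbits) and to compute the action of enough Veech group elements on the $2$-dimensional non-tautological homology. First I would analyse the cylinder decompositions of $L(n,m)$: horizontally there are two cylinders with cores $\alpha_1,\alpha_2$ of circumferences $n,1$ and heights $1,m-1$; vertically there are $n$ cylinders with cores $\beta_1,\dots,\beta_n$ of circumferences $m,1,\dots,1$. A straightforward $2$-chain argument on the unit squares shows $\beta_2 = \cdots = \beta_n$ in $H_1$, so $\{\alpha_1,\alpha_2,\beta_1,\beta_2\}$ is a basis of $H_1(L(n,m),\mathbb{Z})$. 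Since the pushforward to $H_1(T^2)$ sends these to $n[\alpha],[\alpha],m[\beta],[\beta]$, the non-tautological summand is $H_1^{(0)} = \operatorname{span}_{\mathbb{Z}}\{e_1,e_2\}$ with $e_1 = \alpha_1 - n\alpha_2$ and $e_2 = \beta_1 - m\beta_2$.

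Next, using the Picard--Lefschetz formula and the intersection numbers $\alpha_1\!\cdot\!\beta_1 = \alpha_1\!\cdot\!\beta_2 = \alpha_2\!\cdot\!\beta_1 = 1$, $\alpha_2\!\cdot\!\beta_2 = 0$ (read off from the L-shape), I would compute the Kontsevich-Zorich action of the minimal horizontal and vertical multitwists $U^n$, $V^m$ and of the hyperelliptic involution $\iota$. In the basis $(e_1,e_2)$ this gives
\[
U^n \mapsto \begin{pmatrix} 1 & m-1 \\ 0 & 1 \end{pmatrix},\qquad V^m \mapsto \begin{pmatrix} 1 & 0 \\ 1-n & 1 \end{pmatrix},\qquad \iota \mapsto -\mathrm{Id}.
\]
For part (1) I would choose $L(n,m)$ with $n$ even and $m$ odd; then $m-1$ is even and all three matrices lie in the index-$3$ subgroup $\Gamma^0(2) = \{M \in \SL_2(\mathbb{Z}) : M_{12} \equiv 0 \pmod 2\}$. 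To conclude index $\leq 3$ it then suffices to supplement the above by further Veech elements whose KZ images together with the three matrices generate $\Gamma^0(2)$. For part (2) I would use $L(n,m)$ with both $n,m$ even, so that $m-1$ and $1-n$ are odd and the three matrices lie in no fixed index-$3$ subgroup; again extra generators must promote the image to all of $\SL_2(\mathbb{Z})$.

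The main obstacle is producing those extra generators and controlling their KZ action. The obvious matrices fall short by themselves: in part (2) they already generate a subgroup of order at most $6$ in $\SL_2(\mathbb{Z}/3)$ out of $24$, and in part (1) they fail to reach $U^2$ as soon as $m \geq 5$, so $\Gamma^0(2)$ is not yet exhausted. The natural remedy is to use periodic directions of $L(n,m)$ other than horizontal and vertical --- slope $(1,1)$ being the first candidate, with parabolic of the form $\begin{pmatrix} 1-t & t \\ -t & 1+t \end{pmatrix}$ for a $t$ read off from the cylinder moduli. For each such direction one expresses the new core curve in the basis $\{\alpha_1,\alpha_2,\beta_1,\beta_2\}$ by an explicit trajectory analysis, determines its class modulo the tautological part, and checks by a matrix computation that together with the three earlier generators the new ones close up $\Gamma^0(2)$ in case (1), respectively $\SL_2(\mathbb{Z})$ in case (2). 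Carrying this out uniformly in $n$ and $m$, rather than case by case, is the subtlest point.
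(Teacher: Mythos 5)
Your setup is sound and matches the paper's framework: reduce to an L-shaped representative of each orbit, take the span of $\alpha_1-n\alpha_2$ and $\beta_1-m\beta_2$ as the non-tautological part, and compute the action of affine multitwists via the Picard--Lefschetz/intersection-number formula. Your horizontal and vertical twist matrices are consistent with the paper's (for $L(2,2n)$ your $V^m$ gives exactly the paper's $D_Y=\left(\begin{smallmatrix}1&0\\-1&1\end{smallmatrix}\right)$), and you correctly diagnose that these, even together with $-\mathrm{Id}$, do not generate a subgroup of index $1$ (resp.\ $\leq 3$).

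However, that diagnosis is precisely where the proof has to begin, and your proposal stops there. The entire content of the argument is the production and computation of the additional parabolic directions, and you leave this as a plan ("the natural remedy is to use periodic directions \dots slope $(1,1)$ being the first candidate \dots Carrying this out uniformly \dots is the subtlest point"). This is a genuine gap, not a routine verification: one must (i) choose a direction in which the cylinder decomposition has controllable combinatorics uniformly in the degree, (ii) determine which saddle connections bound which cylinders, (iii) express the core curves in the homology basis, and (iv) check that the resulting matrix, together with one other twist, generates the right group. The paper does this with the specific directions $(n,n+1)$ on $L(2,2n)$ (yielding $\left(\begin{smallmatrix}2&1\\-1&0\end{smallmatrix}\right)$, which with $D_Y$ generates $\SL_2(\mathbb{Z})$) and $(2n+1,2n+3)$, $(2n+2,2n+1)$ on $L(2,2n+1)$ (yielding generators of an index-$3$ subgroup), and the uniform-in-$n$ control is achieved through the $(n,n+1)$-lattice-point bookkeeping and the separatrix diagram, which pin down the two-cylinder structure, the combinatorial lengths, and all intersection numbers. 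Your candidate slope $(1,1)$ is not analysed at all, and there is no reason given that it produces a two-cylinder direction with the required image; without an explicit direction and the completed matrix computation, neither part of the theorem is established. (A smaller point: for part (1) you only need the image to contain a subgroup of index $\leq 3$, so the containment in $\Gamma^0(2)$ is not needed for the stated claim; it would only be relevant to the exact-index statement, which the paper itself leaves open.)
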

We proceed as follows. We choose for each degree and for each orbit an L-origami $\mathcal{O}$ as representative. Then we take two directions and their corresponding Dehn multitwists, which are elements of the Veech group of $\mathcal{O}$, as in \cite{MR1005006} (Proposition 2.4). Finally we compute the actions of this Dehn multitwists on the non tautological part of the homology and show that the indices of the groups, generated by them, is 1 or 3. 

The following statements still miss to complete the proof of the entire conjecture: In the index 3 case, we just showed, that the index is at most 3. Also the in the even case, the following conjecture still misses.
\begin{con}
In the setting of \Cref{main}, the index of $\Gamma$ in $\SL_2(\mathbb{Z})$ is 3, if $d$ is odd and $\mathcal{O}$ lies in $\mathcal{B}_d$.
\end{con}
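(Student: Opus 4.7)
The plan is to follow the template used for \Cref{main}: choose the representative $\mathcal{O}=L(n,m)$ with both $n$ and $m$ odd (the standard representative for $\mathcal{B}_d$) and study the two parabolic elements of the Veech group coming from the horizontal and vertical Dehn multitwists of Thurston--Veech (cf.\ \cite{MR1005006}). Calling $A_h,A_v\in\SL_2(\mathbb{Z})$ the matrices they induce on the rank-$2$ non-tautological part $H_1^{nt}(\mathcal{O};\mathbb{Q})$, the goal reduces to showing that $\langle A_h,A_v\rangle$ has index exactly $3$ in $\SL_2(\mathbb{Z})$; since this subgroup is contained in $\Gamma$, and since the same argument as in the even case of \Cref{main} should give the upper bound $[\SL_2(\mathbb{Z}):\Gamma]\le 3$, both bounds on $[\SL_2(\mathbb{Z}):\Gamma]$ would then follow at once.

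Concretely, I would first fix a basis of $H_1^{nt}(\mathcal{O};\mathbb{Q})$ built from cycles supported on the cylinders of the two foliations, corrected so as to be orthogonal to the tautological classes. Using the Thurston--Veech formula $T_c(\gamma)=\gamma+\langle\gamma,c\rangle c$ summed over the core curves in each direction, I would write $A_h$ and $A_v$ as explicit integer matrices whose entries are polynomials in $n$ and $m$. The expectation is that, because both $n$ and $m$ are odd, these matrices lie in a common index-$3$ subgroup $H\subseteq\SL_2(\mathbb{Z})$ — the natural candidate being the kernel of the composition $\SL_2(\mathbb{Z})\twoheadrightarrow\SL_2(\mathbb{Z})^{\mathrm{ab}}\cong\mathbb{Z}/12\twoheadrightarrow\mathbb{Z}/3$, and the parity condition should be precisely what produces the obstruction distinguishing $\mathcal{B}_d$ from $\mathcal{A}_d$ (where the analogous computation in \Cref{main} yields index $1$). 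A final check that $A_h$ and $A_v$ surject onto $H/[H,H]$, i.e.\ actually generate $H$, would give the matching lower bound.

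The principal difficulty is the parameter-dependent computation: the matrices $A_h,A_v$ vary with $n$ and $m$, and the index-$3$ obstruction must be uniform in these parameters. A sensible strategy is to work out a few small cases such as $L(1,3)$ and $L(3,3)$ to guess both the correct subgroup $H$ and the correct invariant, and then verify the containment $A_h,A_v\in H$ in general by reducing the entries of the matrices modulo $3$ (or modulo $12$). A secondary point is ruling out that $\langle A_h,A_v\rangle$ is a proper subgroup of $H$; this should follow from a short word-algebraic computation in $\SL_2(\mathbb{Z})$ once the matrices are known.
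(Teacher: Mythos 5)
First, be aware that the statement you are proving is left \emph{open} in the paper: it is stated as a conjecture precisely because the author could not establish it, so there is no proof of it to compare yours against. Your plan reproduces the paper's method for \Cref{main} --- pick the L-shaped representative, compute the action of two Dehn multitwists on the non-tautological part of homology via the Thurston--Veech formula, and identify the subgroup generated by the resulting two matrices. Carried out correctly, that method can deliver the containment of an explicit index-$3$ subgroup in $\Gamma$, and hence the bound $[\SL_2(\Z):\Gamma]\le 3$.

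The genuine gap is the lower bound. Your sentence that ``both bounds on $[\SL_2(\Z):\Gamma]$ would then follow at once'' is a logical error: if $\langle A_h,A_v\rangle\subseteq\Gamma$ has index $3$, you learn only that $[\SL_2(\Z):\Gamma]$ divides $3$, i.e.\ is $1$ or $3$; an upper bound of $3$ on the index of $\Gamma$ adds nothing. To conclude the index is exactly $3$ you must show $\Gamma\neq\SL_2(\Z)$, and this cannot be done by computing the images of two chosen multitwists --- you would need an invariant (an index-$3$ subgroup $H$, or equivalently some mod-$2$ or mod-$3$ structure on the non-tautological homology) preserved by the image of the \emph{entire} Veech group. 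No such invariant appears in your plan, and this is exactly the step the paper itself flags as missing: even in the even-degree case it only proves ``at most $3$''. Two smaller points: your candidate $H=\ker\bigl(\SL_2(\Z)\to\Z/3\bigr)$ is the unique normal index-$3$ subgroup and is almost certainly not the right one, since in the paper's even case the index-$3$ subgroup obtained contains $\bigl(\begin{smallmatrix}1&0\\-1&1\end{smallmatrix}\bigr)$, a conjugate of the standard parabolic $T$, which maps to a generator of $\Z/3$ under the abelianization; the relevant subgroup is therefore a non-normal one (of $\Gamma^0(2)$ type). Also, $L(1,3)$ is not a genus-$2$ origami (both parameters must be at least $2$), so your smallest test case should be something like $L(3,3)$.
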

\subsection*{Acknowledgments}
I am grateful for the  support provided by my supervisor Gabriela Weitze-Schmithüsen  throughout working on this paper. This work was founded by the Project-ID 286237555—TRR 195—by the Deutsche Forschungsgemeinschaft (DFG, German Research
Foundation).
\section{Computations of the indices}
\subsection{The odd case and the orbit $\mathcal{A}_d$}
We first show that the index of the Kontsevich-Zorich monodromy of the L-origami $L(2, 2n)$ for $ n \in \mathbb{N}$ (see picture below) is $1.$ 
In fact we show that the Kontsevich-Zorich monodromy is generated by the action of the  Dehn multitwists of the cylinders in directions $(0,1)$ and $(n , n+1)$ on the homology. We show, that this action is given by the matrices $\begin{pmatrix}
 1 & -1 \\  0 & 1
\end{pmatrix}$ and $ \begin{pmatrix}
2& -1 \\ 1 & 0
\end{pmatrix}$, with respect to a given basis of the homology. These matrices generate $\SL_2(\mathbb{Z}).$ The cylinder decomposition of the direction $(n, n+1)$ is as shown  in  Figure \labelcref{ZylinderDecom2} and Figure \labelcref{ZylinderDecom3}:
\begin{figure}
\begin{tikzpicture}[scale = 1]
\draw ( 0 , 0) -- ( 1 , 0) -- ( 1 , 1) -- ( 0 , 1) -- ( 0 , 0) ; 
\draw ( 1 , 0) -- ( 2 , 0) -- ( 2 , 1) -- ( 1 , 1) -- ( 1 , 0) ; 
\draw ( 0 , 1) -- ( 1 , 1) -- ( 1 , 2) -- ( 0 , 2) -- ( 0 , 1) ; 
\draw ( 0 , 2) -- ( 1 , 2) -- ( 1 , 3) -- ( 0 , 3) -- ( 0 , 2) ; 
\draw ( 0 , 3) -- ( 1 , 3) -- ( 1 , 4) -- ( 0 , 4) -- ( 0 , 3) ; 
\draw[color = red] ( 0. , 0. ) -- ( 0.66666666666666663 , 1.); 
 \draw[color = red] ( 0.66666666666666663 , 1. ) -- ( 1. , 1.5); 
 \draw[color = red] ( 0. , 1.5 ) -- ( 0.33333333333333331 , 2.); 
 \draw[color = red] ( 0.33333333333333331 , 2. ) -- ( 1. , 3.); 
 \draw[color = red] ( 0. , 3. ) -- ( 0.66666666666666663 , 4.); 
 \draw[color = red] ( 0.66666666666666663 , 0. ) -- ( 1. , 0.5); 
 \draw[color = red] ( 1. , 0.5 ) -- ( 1.3333333333333333 , 1.); 
 \draw[color = red] ( 1.3333333333333333 , 0. ) -- ( 2. , 1.); 
 \draw[color = red] ( 0. , 1. ) -- ( 0.66666666666666663 , 2.); 
 \draw[color = red] ( 0.66666666666666663 , 2. ) -- ( 1. , 2.5); 
 \draw[color = red] ( 0. , 2.5 ) -- ( 0.33333333333333331 , 3.); 
 \draw[color = red] ( 0.33333333333333331 , 3. ) -- ( 1. , 4.); 
 \draw[color = green] ( 1. , 0. ) -- ( 1.6666666666666667 , 1.); 
 \draw[color = green] ( 1.6666666666666667 , 0. ) -- ( 2. , 0.5); 
 \draw[color = green] ( 0. , 0.5 ) -- ( 0.33333333333333331 , 1.); 
 \draw[color = green] ( 0.33333333333333331 , 1. ) -- ( 1. , 2.); 
 \draw[color = green] ( 0. , 2. ) -- ( 0.66666666666666663 , 3.); 
 \draw[color = green] ( 0.66666666666666663 , 3. ) -- ( 1. , 3.5); 
 \draw[color = green] ( 0. , 3.5 ) -- ( 0.33333333333333331 , 4.); 
 \draw[color = green] ( 0.33333333333333331 , 0. ) -- ( 1. , 1.); 
\end{tikzpicture}
\caption{Cylinder decomposition of $L(2,2n)$ with $n = 2$ in direction $(2,3)$}
\label{ZylinderDecom2}
\end{figure}
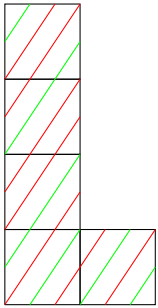
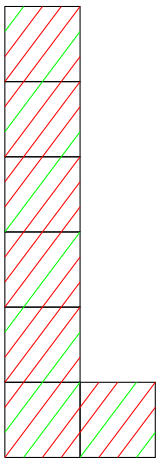
\begin{figure}
\begin{tikzpicture}[scale = 1]
\draw ( 0 , 0) -- ( 1 , 0) -- ( 1 , 1) -- ( 0 , 1) -- ( 0 , 0) ; 
\draw ( 1 , 0) -- ( 2 , 0) -- ( 2 , 1) -- ( 1 , 1) -- ( 1 , 0) ; 
\draw ( 0 , 1) -- ( 1 , 1) -- ( 1 , 2) -- ( 0 , 2) -- ( 0 , 1) ; 
\draw ( 0 , 2) -- ( 1 , 2) -- ( 1 , 3) -- ( 0 , 3) -- ( 0 , 2) ; 
\draw ( 0 , 3) -- ( 1 , 3) -- ( 1 , 4) -- ( 0 , 4) -- ( 0 , 3) ; 
\draw ( 0 , 4) -- ( 1 , 4) -- ( 1 , 5) -- ( 0 , 5) -- ( 0 , 4) ; 
\draw ( 0 , 5) -- ( 1 , 5) -- ( 1 , 6) -- ( 0 , 6) -- ( 0 , 5) ; 
\draw[color = red] ( 0. , 0. ) -- ( 0.75 , 1.); 
 \draw[color = red] ( 0.75 , 1. ) -- ( 1. , 1.3333333333333333); 
 \draw[color = red] ( 0. , 1.3333333333333333 ) -- ( 0.5 , 2.); 
 \draw[color = red] ( 0.5 , 2. ) -- ( 1. , 2.6666666666666665); 
 \draw[color = red] ( 0. , 2.6666666666666665 ) -- ( 0.25 , 3.); 
 \draw[color = red] ( 0.25 , 3. ) -- ( 1. , 4.); 
 \draw[color = red] ( 0. , 4. ) -- ( 0.75 , 5.); 
 \draw[color = red] ( 0.75 , 5. ) -- ( 1. , 5.333333333333333); 
 \draw[color = red] ( 0. , 5.333333333333333 ) -- ( 0.5 , 6.); 
 \draw[color = red] ( 0.5 , 0. ) -- ( 1. , 0.66666666666666663); 
 \draw[color = red] ( 1. , 0.66666666666666663 ) -- ( 1.25 , 1.); 
 \draw[color = red] ( 1.25 , 0. ) -- ( 2. , 1.); 
 \draw[color = red] ( 0. , 1. ) -- ( 0.75 , 2.); 
 \draw[color = red] ( 0.75 , 2. ) -- ( 1. , 2.3333333333333335); 
 \draw[color = red] ( 0. , 2.3333333333333335 ) -- ( 0.5 , 3.); 
 \draw[color = red] ( 0.5 , 3. ) -- ( 1. , 3.6666666666666665); 
 \draw[color = red] ( 0. , 3.6666666666666665 ) -- ( 0.25 , 4.); 
 \draw[color = red] ( 0.25 , 4. ) -- ( 1. , 5.); 
 \draw[color = red] ( 0. , 5. ) -- ( 0.75 , 6.); 
 \draw[color = red] ( 0.75 , 0. ) -- ( 1. , 0.33333333333333331); 
 \draw[color = red] ( 1. , 0.33333333333333331 ) -- ( 1.5 , 1.); 
 \draw[color = red] ( 1.5 , 0. ) -- ( 2. , 0.66666666666666663); 
 \draw[color = red] ( 0. , 0.66666666666666663 ) -- ( 0.25 , 1.); 
 \draw[color = red] ( 0.25 , 1. ) -- ( 1. , 2.); 
 \draw[color = red] ( 0. , 2. ) -- ( 0.75 , 3.); 
 \draw[color = red] ( 0.75 , 3. ) -- ( 1. , 3.3333333333333335); 
 \draw[color = red] ( 0. , 3.3333333333333335 ) -- ( 0.5 , 4.); 
 \draw[color = red] ( 0.5 , 4. ) -- ( 1. , 4.666666666666667); 
 \draw[color = red] ( 0. , 4.666666666666667 ) -- ( 0.25 , 5.); 
 \draw[color = red] ( 0.25 , 5. ) -- ( 1. , 6.); 
 \draw[color = green] ( 1. , 0. ) -- ( 1.75 , 1.); 
 \draw[color = green] ( 1.75 , 0. ) -- ( 2. , 0.33333333333333331); 
 \draw[color = green] ( 0. , 0.33333333333333331 ) -- ( 0.5 , 1.); 
 \draw[color = green] ( 0.5 , 1. ) -- ( 1. , 1.6666666666666667); 
 \draw[color = green] ( 0. , 1.6666666666666667 ) -- ( 0.25 , 2.); 
 \draw[color = green] ( 0.25 , 2. ) -- ( 1. , 3.); 
 \draw[color = green] ( 0. , 3. ) -- ( 0.75 , 4.); 
 \draw[color = green] ( 0.75 , 4. ) -- ( 1. , 4.333333333333333); 
 \draw[color = green] ( 0. , 4.333333333333333 ) -- ( 0.5 , 5.); 
 \draw[color = green] ( 0.5 , 5. ) -- ( 1. , 5.666666666666667); 
 \draw[color = green] ( 0. , 5.666666666666667 ) -- ( 0.25 , 6.); 
 \draw[color = green] ( 0.25 , 0. ) -- ( 1. , 1.);
\end{tikzpicture}
\caption{Cylinder decomposition of $L(2,2n)$ with $n = 3$ in direction: $(3,4)$}
\label{ZylinderDecom3}
\end{figure} 
We get always three saddle connections in the direction $(n, n+1).$ The green saddle connection is the one,  starting in the left bottom corner of the rightmost square. The two red saddle connections $r_1$ and $r_2$ are the other ones.  See \Cref{comp} for the proofs of the statements presented in the following.
We call \begin{enumerate}
\item[(i)] $\Theta_g$ the green cylinder. That is the cylinder with the green saddle connection as the upper boundary.
\item[(ii)] $\Theta_r$ the red cylinder. That is the cylinder with the red saddle connections as the upper boundary. 
\item[(iii)] $Y_1$ is the small vertical cylinder.
\item[(iv)] $Y_2$ is the large vertical cylinder.
\end{enumerate}
Note that we use the name of the cylinders also as name for their mid curves, which we consider as elements of the homology.

Now we remind of the definition of the combinatorial length (or width).
\begin{defn}
The \textbf{combinatorial length} of a cylinder of an origami $\pi \colon \mathcal{O}\to E $ with mid curve $\gamma$ is the multiplicity of the curve $\pi \circ \gamma,$ i.e. $\#\{t \in (0,1], \pi(\gamma(t)) = \pi(\gamma(0))\}.$
\end{defn}
Note that the definition of the combinatorial length is equivalent to the definition of the combinatorial width from  \cite{bonnafoux2022arithmeticity}.
The former cylinders have the combinatorial length $f_{\Theta_r} = 2n-1, f_{\Theta_g} = 2, f_{Y_2} = 2n, f_{Y_1} = 1$ and the combinatorial heights (this are the $c_i$ defined below) of these cylinders is 1, since the cylinders $\Theta_g$ and $\Theta_r$ have the same height, the same holds for $Y_1$ and $Y_2.$
The cylinder decomposition in direction $(n, n+1)$ defines (up to inverse) a unique affine map, which is a minimal Dehn multitwists along the core curves of these cylinders. Let $D_\Theta$ be the Dehn multitwist in  direction $(n, n+1)$ and analogously let  $D_Y$ be the Dehn multitwist in direction $(0, 1)$.

We take as basis of the homology the horizontal and vertical mid curves $X_1, X_2, Y_1, Y_2$ as in Figure \labelcref{homology} and as basis of the non-tautological part 
\begin{align*}
	X &= X_2 - 2 X_1\\
	Y &= Y_2 - 2nY_1
\end{align*}

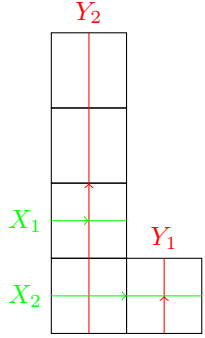
\begin{figure}
\begin{tikzpicture}
\draw ( 0 , 0) -- ( 1 , 0) -- ( 1 , 1) -- ( 0 , 1) -- ( 0 , 0) ; 
\draw ( 1 , 0) -- ( 2 , 0) -- ( 2 , 1) -- ( 1 , 1) -- ( 1 , 0) ; 
\draw ( 0 , 1) -- ( 1 , 1) -- ( 1 , 2) -- ( 0 , 2) -- ( 0 , 1) ; 
\draw ( 0 , 2) -- ( 1 , 2) -- ( 1 , 3) -- ( 0 , 3) -- ( 0 , 2) ; 
\draw ( 0 , 3) -- ( 1 , 3) -- ( 1 , 4) -- ( 0 , 4) -- ( 0 , 3) ; 
\draw[color = red, ->] (0.5,0) -- (0.5, 2);
\draw[color = red] (0.5,2) -- (0.5, 4) node[ above ]{$Y_2$};
\draw[color = red, ->] (1.5,0) -- (1.5, 0.5);
\draw[color = red] (1.5,0.5) -- (1.5, 1) node[ above ]{$Y_1$};
\draw[color = green, ->] (0, 0.5) node [left]{$X_2$} -- (1, 0.5);
\draw[color = green] (1,0.5) -- (2, 0.5);
\draw[color = green, ->] (0, 1.5) node [left]{$X_1$} -- (0.5, 1.5);
\draw[color = green] (0.5,1.5) -- (1, 1.5);
\end{tikzpicture}
\caption{the basis of the homology}
\label{homology}
\end{figure}

Let $D$ be the Dehn multitwist in a two cylinder direction with mid cylinders $\gamma_1$ and $\gamma_2.$
We compute the action $D_\ast$ of the Dehn multitwist $D$ in a  on the non-tautological part of the homology with the formula
\[
	D_\ast = \Id + c_1 f_2 \Omega(\cdot, \gamma_1 )\gamma_1 + c_2 f_1 \Omega(\cdot, \gamma_2 )\gamma_2
\]
from \cite{bonnafoux2022arithmeticity} (Chapter 2.4). The $f_i$ are the combinatorial lengths of $\gamma_i$ and $c_i$ are the smallest integers, such that the $c_1 h(\gamma_2) = c_2 h(\gamma_1))$, where $h(\gamma_i)$ is the height of $\gamma_i.$

In Figure \labelcref{intersectionNumberEven} we summarize the needed intersection numbers. Note that the sign of the intersection number $\Omega(s,t)$ of curves $s$ and $t$ in direction $v$ and $w$ is the sign of the determinant of the matrix $\begin{pmatrix}
v & w 
\end{pmatrix}$. 
We compute the not obvious intersection numbers in \Cref{comp}. 

Now we compute the cylinder middles $\Theta_r$ and $\Theta_g$ as linear combination of our chosen basis $(X_1, X_2, Y_1, Y_2)$ of the homology:
 \begin{align*}
 \Theta_r &= a X_1 + bX_2 + cY_1 + dY_2
 \end{align*}
 Let 
 \begin{align*}
 A =\begin{pmatrix}
 0 & 0 & 0 & 1 \\
 0 & 0 & 1 & 1 \\ 
 0 & -1 & 0 & 0 \\ 
 -1 & -1 & 0 & 0 \\ 
 \end{pmatrix}
 \end{align*}
 be the fundamental matrix of the intersection form with respect to the basis of the homology. Because of the bilinearity, we get $x =(a,b,c,d)$ is the unique solution of the equation $Ax = (\Omega(\Theta_r, X_1), \Omega(\Theta_r, X_2), \Omega(\Theta_r, Y_1), \Omega(\Theta_r, Y_2))^t.$ The same holds for $\Theta_g$, hence we get
 \begin{align*}
 	\Theta_r &=(n-1)X_2 +((2n-3)n +2)X_1 + nY_2 +(n-1)Y_1 \text{ and}\\
 	\Theta_g &= X_2 + 2(n-1)X_1 +Y_2 +2Y_1.
 \end{align*}
It follows
\begin{align*}
D_\Theta(X) &= X + c_{\Theta_g}f_{\Theta_r}\Omega(X, \Theta_g)\Theta_g
				+ c_{\Theta_r}f_{\Theta_g}\Omega(X, \Theta_r)\Theta_r\\
			&= X + (2n-1)\Theta_g - 2\Theta_r\\
			&= X + (2n-1)X_2 + 2(2n-1)(n-1)X_1 + (2n-1)Y_2 + (2n-1)2Y_1 \\
			& \quad   -2(n-1)X_2 -2((2n-3)n +2)X_1 - 2nY_2 - 2(n-1)Y_1\\
			&= X + X_2 - 2X_1 + 2nY_1 -Y_2 = 2X -Y\\
D_\Theta(Y) &= Y + c_{\Theta_g}f_{\Theta_r}\Omega(Y, \Theta_g)\Theta_g
				+ c_{\Theta_r}f_{\Theta_g}\Omega(Y, \Theta_r)\Theta_r\\
			&= Y - (n-1)\Theta_g + 2\Theta_r\\
			&= Y + (2n-1)X_2 + 2(2n-1)^2X_1 + (2n-1)Y_2 + (2n-1)2Y_1 \\
			& \quad   -2(n-1)X_2 -2((2n-3)n +2)X_1 - 2nY_2 - 2(n-1)Y_1\\
			&= Y + X_2 - 2X_1 + 2nY_1 - Y_2 = X
\end{align*}

and
\begin{align*}
	D_Y( X ) &= X + c_{Y_2}f_{Y_1}\Omega(X, Y_2)Y_2
				+ c_{Y_1}f_{Y_2}\Omega(X, Y_1)Y_1\\
			&= X - Y_2 + 2nY_1 = X-Y\\
	D_Y(Y) &= Y			
\end{align*}
So we have 
\begin{align*}
D_\Theta = \begin{pmatrix}
2 & 1\\-1 & 0
\end{pmatrix}
\text{  and  }
D_Y =
\begin{pmatrix}
1 & 0 \\ -1 & 1
\end{pmatrix}
\end{align*}

\begin{figure}
\centering
\begin{minipage}{.5\linewidth}
\begin{tabular}{c | c | c } 
$\Omega(X_i, \Theta_j)$ & $\Theta_r$ & $ \Theta_g$\\
\hline
$X_2$ & $2n-1$ &  $3$\\
\hline
$X_1$ & $n$ &  $1$\\
\hline
$X$ & $-1$ &  $1$\\
\end{tabular}

\begin{tabular}{c | c | c } 
$\Omega(X_i, Y_j)$ & $Y_1$ & $ Y_2$\\
\hline
$X_2$ & $1$ &  $1$\\
\hline
$X_1$ & $0$ &  $1$\\
\hline
$X$ & $1$ &  $-1$\\
\end{tabular}
\end{minipage}
\qquad
\begin{minipage}{.5\linewidth}
\begin{tabular}{c | c | c } 
$\Omega(Y_i, \Theta_j)$ & $\Theta_r$ & $ \Theta_g$\\
\hline
$Y_1$ & $-(n-1)$ &  $-1$\\
\hline
$Y_2$ & $-(2n-2)n - 1$ &  $-(2n-1)$\\
\hline
$Y$ & $-1$ &  $1$\\
\end{tabular}

\begin{tabular}{c | c | c } 
$\Omega(Y_i, Y_j)$ & $Y_1$ & $Y_2$\\
\hline
$Y_1$ & $0$ &  $0$\\
\hline
$Y_2$ & $0$ &  $0$\\
\hline
$Y$ & $0$ &  $0$\\
\end{tabular}
\end{minipage}
\caption{Intersection numbers of cylinder middles and the homology}
\label{intersectionNumberEven}
\end{figure}

\subsection{The even case}
In this chapter, we treat the origamis in  $\mathcal{H}(2)$ of even degree. In this case  we choose as representatives $L(2, 2n+1), n \in \mathbb{N}$. We show that the index of the Kontsevich-Zorich monodromy in $\SL_2(\mathbb{Z})$ is at most three. In fact we show that the group generated by the action of the Dehn multitwists in directions $(2n+2, 2n+1)$ and $(2n+1, 2n+3)$ is the index 3 subgroup generated by the matrices $\begin{pmatrix}
3 & 2\\-2 & -1
\end{pmatrix} $
and 
$
\begin{pmatrix}
1 & 0 \\ -1 & 1
\end{pmatrix}
$.
We get three saddle connections in direction $(2n+1, 2n+3)$ (see Figure \labelcref{ZylinderdecomPsi1} and Figure \labelcref{ZylinderdecomPsi2}). The green saddle connection $g$ is that, starting in the left bottom corner of the rightmost square. The two red saddle connections $r_1$ and $r_2$ are the other ones.
And we get three saddle connections in direction $(2n+2, 2n+1)$ (see Figure \labelcref{ZylinderdecomTheta1} and Figure \labelcref{ZylinderdecomTheta2}). The blue saddle connection $b$ is that, starting in the left bottom corner of the second square from the bottom. The two magenta saddle connections $m_1$ and $m_2$ are  the other ones. 
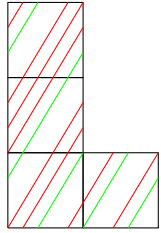
\begin{figure}
\begin{tikzpicture}[scale = 1]
\draw ( 0 , 0) -- ( 1 , 0) -- ( 1 , 1) -- ( 0 , 1) -- ( 0 , 0) ; 
\draw ( 1 , 0) -- ( 2 , 0) -- ( 2 , 1) -- ( 1 , 1) -- ( 1 , 0) ; 
\draw ( 0 , 1) -- ( 1 , 1) -- ( 1 , 2) -- ( 0 , 2) -- ( 0 , 1) ; 
\draw ( 0 , 2) -- ( 1 , 2) -- ( 1 , 3) -- ( 0 , 3) -- ( 0 , 2) ; 
\draw[color = red] ( 0. , 0. ) -- ( 0.59999999999999998 , 1.); 
 \draw[color = red] ( 0.59999999999999998 , 1. ) -- ( 1. , 1.6666666666666667); 
 \draw[color = red] ( 0. , 1.6666666666666667 ) -- ( 0.20000000000000001 , 2.); 
 \draw[color = red] ( 0.20000000000000001 , 2. ) -- ( 0.80000000000000004 , 3.); 
 \draw[color = red] ( 0.80000000000000004 , 0. ) -- ( 1. , 0.33333333333333331); 
 \draw[color = red] ( 1. , 0.33333333333333331 ) -- ( 1.3999999999999999 , 1.); 
 \draw[color = red] ( 1.3999999999999999 , 0. ) -- ( 2. , 1.); 
 \draw[color = red] ( 0. , 1. ) -- ( 0.59999999999999998 , 2.); 
 \draw[color = red] ( 0.59999999999999998 , 2. ) -- ( 1. , 2.6666666666666665); 
 \draw[color = red] ( 0. , 2.6666666666666665 ) -- ( 0.20000000000000001 , 3.); 
 \draw[color = red] ( 0.20000000000000001 , 0. ) -- ( 0.80000000000000004 , 1.); 
 \draw[color = red] ( 0.80000000000000004 , 1. ) -- ( 1. , 1.3333333333333333); 
 \draw[color = red] ( 0. , 1.3333333333333333 ) -- ( 0.40000000000000002 , 2.); 
 \draw[color = red] ( 0.40000000000000002 , 2. ) -- ( 1. , 3.); 
 \draw[color = green] ( 1. , 0. ) -- ( 1.6000000000000001 , 1.); 
 \draw[color = green] ( 1.6000000000000001 , 0. ) -- ( 2. , 0.66666666666666663); 
 \draw[color = green] ( 0. , 0.66666666666666663 ) -- ( 0.20000000000000001 , 1.); 
 \draw[color = green] ( 0.20000000000000001 , 1. ) -- ( 0.80000000000000004 , 2.); 
 \draw[color = green] ( 0.80000000000000004 , 2. ) -- ( 1. , 2.3333333333333335); 
 \draw[color = green] ( 0. , 2.3333333333333335 ) -- ( 0.40000000000000002 , 3.); 
 \draw[color = green] ( 0.40000000000000002 , 0. ) -- ( 1. , 1.); 
\end{tikzpicture}
\caption{Cylinder decomposition of $L(2,2n + 1)$ with $n = 1$ in direction (3,5)}
\label{ZylinderdecomPsi1}
\end{figure}

\begin{figure}
\begin{tikzpicture}[scale = 1]
\draw ( 0 , 0) -- ( 1 , 0) -- ( 1 , 1) -- ( 0 , 1) -- ( 0 , 0) ; 
\draw ( 1 , 0) -- ( 2 , 0) -- ( 2 , 1) -- ( 1 , 1) -- ( 1 , 0) ; 
\draw ( 0 , 1) -- ( 1 , 1) -- ( 1 , 2) -- ( 0 , 2) -- ( 0 , 1) ; 
\draw ( 0 , 2) -- ( 1 , 2) -- ( 1 , 3) -- ( 0 , 3) -- ( 0 , 2) ; 
\draw ( 0 , 3) -- ( 1 , 3) -- ( 1 , 4) -- ( 0 , 4) -- ( 0 , 3) ; 
\draw ( 0 , 4) -- ( 1 , 4) -- ( 1 , 5) -- ( 0 , 5) -- ( 0 , 4) ; 
\draw[color = red] ( 0. , 0. ) -- ( 0.7142857142857143 , 1.); 
 \draw[color = red] ( 0.7142857142857143 , 1. ) -- ( 1. , 1.3999999999999999); 
 \draw[color = red] ( 0. , 1.3999999999999999 ) -- ( 0.42857142857142855 , 2.); 
 \draw[color = red] ( 0.42857142857142855 , 2. ) -- ( 1. , 2.7999999999999998); 
 \draw[color = red] ( 0. , 2.7999999999999998 ) -- ( 0.14285714285714285 , 3.); 
 \draw[color = red] ( 0.14285714285714285 , 3. ) -- ( 0.8571428571428571 , 4.); 
 \draw[color = red] ( 0.8571428571428571 , 4. ) -- ( 1. , 4.2000000000000002); 
 \draw[color = red] ( 0. , 4.2000000000000002 ) -- ( 0.5714285714285714 , 5.); 
 \draw[color = red] ( 0.5714285714285714 , 0. ) -- ( 1. , 0.59999999999999998); 
 \draw[color = red] ( 1. , 0.59999999999999998 ) -- ( 1.2857142857142858 , 1.); 
 \draw[color = red] ( 1.2857142857142858 , 0. ) -- ( 2. , 1.); 
 \draw[color = red] ( 0. , 1. ) -- ( 0.7142857142857143 , 2.); 
 \draw[color = red] ( 0.7142857142857143 , 2. ) -- ( 1. , 2.3999999999999999); 
 \draw[color = red] ( 0. , 2.3999999999999999 ) -- ( 0.42857142857142855 , 3.); 
 \draw[color = red] ( 0.42857142857142855 , 3. ) -- ( 1. , 3.7999999999999998); 
 \draw[color = red] ( 0. , 3.7999999999999998 ) -- ( 0.14285714285714285 , 4.); 
 \draw[color = red] ( 0.14285714285714285 , 4. ) -- ( 0.8571428571428571 , 5.); 
 \draw[color = red] ( 0.8571428571428571 , 0. ) -- ( 1. , 0.20000000000000001); 
 \draw[color = red] ( 1. , 0.20000000000000001 ) -- ( 1.5714285714285714 , 1.); 
 \draw[color = red] ( 1.5714285714285714 , 0. ) -- ( 2. , 0.59999999999999998); 
 \draw[color = red] ( 0. , 0.59999999999999998 ) -- ( 0.2857142857142857 , 1.); 
 \draw[color = red] ( 0.2857142857142857 , 1. ) -- ( 1. , 2.); 
 \draw[color = red] ( 0. , 2. ) -- ( 0.7142857142857143 , 3.); 
 \draw[color = red] ( 0.7142857142857143 , 3. ) -- ( 1. , 3.3999999999999999); 
 \draw[color = red] ( 0. , 3.3999999999999999 ) -- ( 0.42857142857142855 , 4.); 
 \draw[color = red] ( 0.42857142857142855 , 4. ) -- ( 1. , 4.7999999999999998); 
 \draw[color = red] ( 0. , 4.7999999999999998 ) -- ( 0.14285714285714285 , 5.); 
 \draw[color = red] ( 0.14285714285714285 , 0. ) -- ( 0.8571428571428571 , 1.); 
 \draw[color = red] ( 0.8571428571428571 , 1. ) -- ( 1. , 1.2); 
 \draw[color = red] ( 0. , 1.2 ) -- ( 0.5714285714285714 , 2.); 
 \draw[color = red] ( 0.5714285714285714 , 2. ) -- ( 1. , 2.6000000000000001); 
 \draw[color = red] ( 0. , 2.6000000000000001 ) -- ( 0.2857142857142857 , 3.); 
 \draw[color = red] ( 0.2857142857142857 , 3. ) -- ( 1. , 4.); 
 \draw[color = red] ( 0. , 4. ) -- ( 0.7142857142857143 , 5.); 
 \draw[color = red] ( 0.7142857142857143 , 0. ) -- ( 1. , 0.40000000000000002); 
 \draw[color = red] ( 1. , 0.40000000000000002 ) -- ( 1.4285714285714286 , 1.); 
 \draw[color = red] ( 1.4285714285714286 , 0. ) -- ( 2. , 0.80000000000000004); 
 \draw[color = red] ( 0. , 0.80000000000000004 ) -- ( 0.14285714285714285 , 1.); 
 \draw[color = red] ( 0.14285714285714285 , 1. ) -- ( 0.8571428571428571 , 2.); 
 \draw[color = red] ( 0.8571428571428571 , 2. ) -- ( 1. , 2.2000000000000002); 
 \draw[color = red] ( 0. , 2.2000000000000002 ) -- ( 0.5714285714285714 , 3.); 
 \draw[color = red] ( 0.5714285714285714 , 3. ) -- ( 1. , 3.6000000000000001); 
 \draw[color = red] ( 0. , 3.6000000000000001 ) -- ( 0.2857142857142857 , 4.); 
 \draw[color = red] ( 0.2857142857142857 , 4. ) -- ( 1. , 5.); 
 \draw[color = green] ( 1. , 0. ) -- ( 1.7142857142857142 , 1.); 
 \draw[color = green] ( 1.7142857142857142 , 0. ) -- ( 2. , 0.40000000000000002); 
 \draw[color = green] ( 0. , 0.40000000000000002 ) -- ( 0.42857142857142855 , 1.); 
 \draw[color = green] ( 0.42857142857142855 , 1. ) -- ( 1. , 1.8); 
 \draw[color = green] ( 0. , 1.8 ) -- ( 0.14285714285714285 , 2.); 
 \draw[color = green] ( 0.14285714285714285 , 2. ) -- ( 0.8571428571428571 , 3.); 
 \draw[color = green] ( 0.8571428571428571 , 3. ) -- ( 1. , 3.2000000000000002); 
 \draw[color = green] ( 0. , 3.2000000000000002 ) -- ( 0.5714285714285714 , 4.); 
 \draw[color = green] ( 0.5714285714285714 , 4. ) -- ( 1. , 4.5999999999999996); 
 \draw[color = green] ( 0. , 4.5999999999999996 ) -- ( 0.2857142857142857 , 5.); 
 \draw[color = green] ( 0.2857142857142857 , 0. ) -- ( 1. , 1.); 
\end{tikzpicture}
\caption{Cylinder decomposition of $L(2,2n + 1)$ with $n = 2$ in direction (5,7)}
\label{ZylinderdecomPsi2}
\end{figure}

\begin{figure}
\begin{tikzpicture}[scale = 1]
\draw ( 0 , 0) -- ( 1 , 0) -- ( 1 , 1) -- ( 0 , 1) -- ( 0 , 0) ; 
\draw ( 1 , 0) -- ( 2 , 0) -- ( 2 , 1) -- ( 1 , 1) -- ( 1 , 0) ; 
\draw ( 0 , 1) -- ( 1 , 1) -- ( 1 , 2) -- ( 0 , 2) -- ( 0 , 1) ; 
\draw ( 0 , 2) -- ( 1 , 2) -- ( 1 , 3) -- ( 0 , 3) -- ( 0 , 2) ; 
\draw[color = magenta] ( 0. , 0. ) -- ( 1. , 0.75); 
 \draw[color = magenta] ( 1. , 0.75 ) -- ( 1.3333333333333333 , 1.); 
 \draw[color = magenta] ( 1.3333333333333333 , 0. ) -- ( 2. , 0.5); 
 \draw[color = magenta] ( 0. , 0.5 ) -- ( 0.66666666666666663 , 1.); 
 \draw[color = magenta] ( 0.66666666666666663 , 1. ) -- ( 1. , 1.25); 
 \draw[color = magenta] ( 0. , 1.25 ) -- ( 1. , 2.); 
 \draw[color = magenta] ( 0. , 2. ) -- ( 1. , 2.75); 
 \draw[color = magenta] ( 0. , 2.75 ) -- ( 0.33333333333333331 , 3.); 
 \draw[color = magenta] ( 0.33333333333333331 , 0. ) -- ( 1. , 0.5); 
 \draw[color = magenta] ( 1. , 0.5 ) -- ( 1.6666666666666667 , 1.); 
 \draw[color = magenta] ( 1.6666666666666667 , 0. ) -- ( 2. , 0.25); 
 \draw[color = magenta] ( 0. , 0.25 ) -- ( 1. , 1.); 
 \draw[color = magenta] ( 1. , 0. ) -- ( 2. , 0.75); 
 \draw[color = magenta] ( 0. , 0.75 ) -- ( 0.33333333333333331 , 1.); 
 \draw[color = magenta] ( 0.33333333333333331 , 1. ) -- ( 1. , 1.5); 
 \draw[color = magenta] ( 0. , 1.5 ) -- ( 0.66666666666666663 , 2.); 
 \draw[color = magenta] ( 0.66666666666666663 , 2. ) -- ( 1. , 2.25); 
 \draw[color = magenta] ( 0. , 2.25 ) -- ( 1. , 3.); 
 \draw[color = blue] ( 0. , 1. ) -- ( 1. , 1.75); 
 \draw[color = blue] ( 0. , 1.75 ) -- ( 0.33333333333333331 , 2.); 
 \draw[color = blue] ( 0.33333333333333331 , 2. ) -- ( 1. , 2.5); 
 \draw[color = blue] ( 0. , 2.5 ) -- ( 0.66666666666666663 , 3.); 
 \draw[color = blue] ( 0.66666666666666663 , 0. ) -- ( 1. , 0.25); 
 \draw[color = blue] ( 1. , 0.25 ) -- ( 2. , 1.);
\end{tikzpicture}
\caption{Cylinder decomposition of $L(2,2n+1)$ with $n = 1$ in direction $(4,3)$}
\label{ZylinderdecomTheta1}
\end{figure}

\begin{figure}
\begin{tikzpicture}[scale = 1]
\draw ( 0 , 0) -- ( 1 , 0) -- ( 1 , 1) -- ( 0 , 1) -- ( 0 , 0) ; 
\draw ( 1 , 0) -- ( 2 , 0) -- ( 2 , 1) -- ( 1 , 1) -- ( 1 , 0) ; 
\draw ( 0 , 1) -- ( 1 , 1) -- ( 1 , 2) -- ( 0 , 2) -- ( 0 , 1) ; 
\draw ( 0 , 2) -- ( 1 , 2) -- ( 1 , 3) -- ( 0 , 3) -- ( 0 , 2) ; 
\draw ( 0 , 3) -- ( 1 , 3) -- ( 1 , 4) -- ( 0 , 4) -- ( 0 , 3) ; 
\draw ( 0 , 4) -- ( 1 , 4) -- ( 1 , 5) -- ( 0 , 5) -- ( 0 , 4) ; 
\draw[color = magenta] ( 0. , 0. ) -- ( 1. , 0.83333333333333337); 
 \draw[color = magenta] ( 1. , 0.83333333333333337 ) -- ( 1.2 , 1.); 
 \draw[color = magenta] ( 1.2 , 0. ) -- ( 2. , 0.66666666666666663); 
 \draw[color = magenta] ( 0. , 0.66666666666666663 ) -- ( 0.40000000000000002 , 1.); 
 \draw[color = magenta] ( 0.40000000000000002 , 1. ) -- ( 1. , 1.5); 
 \draw[color = magenta] ( 0. , 1.5 ) -- ( 0.59999999999999998 , 2.); 
 \draw[color = magenta] ( 0.59999999999999998 , 2. ) -- ( 1. , 2.3333333333333335); 
 \draw[color = magenta] ( 0. , 2.3333333333333335 ) -- ( 0.80000000000000004 , 3.); 
 \draw[color = magenta] ( 0.80000000000000004 , 3. ) -- ( 1. , 3.1666666666666665); 
 \draw[color = magenta] ( 0. , 3.1666666666666665 ) -- ( 1. , 4.); 
 \draw[color = magenta] ( 0. , 4. ) -- ( 1. , 4.833333333333333); 
 \draw[color = magenta] ( 0. , 4.833333333333333 ) -- ( 0.20000000000000001 , 5.); 
 \draw[color = magenta] ( 0.20000000000000001 , 0. ) -- ( 1. , 0.66666666666666663); 
 \draw[color = magenta] ( 1. , 0.66666666666666663 ) -- ( 1.3999999999999999 , 1.); 
 \draw[color = magenta] ( 1.3999999999999999 , 0. ) -- ( 2. , 0.5); 
 \draw[color = magenta] ( 0. , 0.5 ) -- ( 0.59999999999999998 , 1.); 
 \draw[color = magenta] ( 0.59999999999999998 , 1. ) -- ( 1. , 1.3333333333333333); 
 \draw[color = magenta] ( 0. , 1.3333333333333333 ) -- ( 0.80000000000000004 , 2.); 
 \draw[color = magenta] ( 0.80000000000000004 , 2. ) -- ( 1. , 2.1666666666666665); 
 \draw[color = magenta] ( 0. , 2.1666666666666665 ) -- ( 1. , 3.); 
 \draw[color = magenta] ( 0. , 3. ) -- ( 1. , 3.8333333333333335); 
 \draw[color = magenta] ( 0. , 3.8333333333333335 ) -- ( 0.20000000000000001 , 4.); 
 \draw[color = magenta] ( 0.20000000000000001 , 4. ) -- ( 1. , 4.666666666666667); 
 \draw[color = magenta] ( 0. , 4.666666666666667 ) -- ( 0.40000000000000002 , 5.); 
 \draw[color = magenta] ( 0.40000000000000002 , 0. ) -- ( 1. , 0.5); 
 \draw[color = magenta] ( 1. , 0.5 ) -- ( 1.6000000000000001 , 1.); 
 \draw[color = magenta] ( 1.6000000000000001 , 0. ) -- ( 2. , 0.33333333333333331); 
 \draw[color = magenta] ( 0. , 0.33333333333333331 ) -- ( 0.80000000000000004 , 1.); 
 \draw[color = magenta] ( 0.80000000000000004 , 1. ) -- ( 1. , 1.1666666666666667); 
 \draw[color = magenta] ( 0. , 1.1666666666666667 ) -- ( 1. , 2.); 
 \draw[color = magenta] ( 0. , 2. ) -- ( 1. , 2.8333333333333335); 
 \draw[color = magenta] ( 0. , 2.8333333333333335 ) -- ( 0.20000000000000001 , 3.); 
 \draw[color = magenta] ( 0.20000000000000001 , 3. ) -- ( 1. , 3.6666666666666665); 
 \draw[color = magenta] ( 0. , 3.6666666666666665 ) -- ( 0.40000000000000002 , 4.); 
 \draw[color = magenta] ( 0.40000000000000002 , 4. ) -- ( 1. , 4.5); 
 \draw[color = magenta] ( 0. , 4.5 ) -- ( 0.59999999999999998 , 5.); 
 \draw[color = magenta] ( 0.59999999999999998 , 0. ) -- ( 1. , 0.33333333333333331); 
 \draw[color = magenta] ( 1. , 0.33333333333333331 ) -- ( 1.8 , 1.); 
 \draw[color = magenta] ( 1.8 , 0. ) -- ( 2. , 0.16666666666666666); 
 \draw[color = magenta] ( 0. , 0.16666666666666666 ) -- ( 1. , 1.); 
 \draw[color = magenta] ( 1. , 0. ) -- ( 2. , 0.83333333333333337); 
 \draw[color = magenta] ( 0. , 0.83333333333333337 ) -- ( 0.20000000000000001 , 1.); 
 \draw[color = magenta] ( 0.20000000000000001 , 1. ) -- ( 1. , 1.6666666666666667); 
 \draw[color = magenta] ( 0. , 1.6666666666666667 ) -- ( 0.40000000000000002 , 2.); 
 \draw[color = magenta] ( 0.40000000000000002 , 2. ) -- ( 1. , 2.5); 
 \draw[color = magenta] ( 0. , 2.5 ) -- ( 0.59999999999999998 , 3.); 
 \draw[color = magenta] ( 0.59999999999999998 , 3. ) -- ( 1. , 3.3333333333333335); 
 \draw[color = magenta] ( 0. , 3.3333333333333335 ) -- ( 0.80000000000000004 , 4.); 
 \draw[color = magenta] ( 0.80000000000000004 , 4. ) -- ( 1. , 4.166666666666667); 
 \draw[color = magenta] ( 0. , 4.166666666666667 ) -- ( 1. , 5.); 
 \draw[color = blue] ( 0. , 1. ) -- ( 1. , 1.8333333333333333); 
 \draw[color = blue] ( 0. , 1.8333333333333333 ) -- ( 0.20000000000000001 , 2.); 
 \draw[color = blue] ( 0.20000000000000001 , 2. ) -- ( 1. , 2.6666666666666665); 
 \draw[color = blue] ( 0. , 2.6666666666666665 ) -- ( 0.40000000000000002 , 3.); 
 \draw[color = blue] ( 0.40000000000000002 , 3. ) -- ( 1. , 3.5); 
 \draw[color = blue] ( 0. , 3.5 ) -- ( 0.59999999999999998 , 4.); 
 \draw[color = blue] ( 0.59999999999999998 , 4. ) -- ( 1. , 4.333333333333333); 
 \draw[color = blue] ( 0. , 4.333333333333333 ) -- ( 0.80000000000000004 , 5.); 
 \draw[color = blue] ( 0.80000000000000004 , 0. ) -- ( 1. , 0.16666666666666666); 
 \draw[color = blue] ( 1. , 0.16666666666666666 ) -- ( 2. , 1.); 
\end{tikzpicture}
\caption{Cylinder decomposition of $L(2, 2n+1)$ with $n = 2$ in  direction (6,5)}
\label{ZylinderdecomTheta2}
\end{figure}
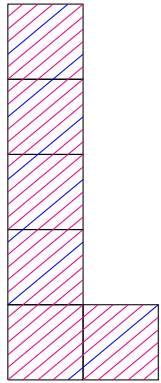

We call
\begin{enumerate}
\item[(i)] $\Psi_g$ the green cylinder. That is the cylinder in direction $(2n+1, 2n+3 )$ with the green saddle connection as the upper boundary.
\item[(ii)] $\Psi_r$ the red cylinder. That is the cylinder in direction $(2n+1, 2n+3 )$ with the red saddle connections as the upper boundary. 
\item[(iii)] $\Theta_m$  the magenta cylinder. That is the cylinder in direction $(2n+2, 2n+1)$ with the magenta saddle connections as the upper boundary. 
\item[(iv)] $\Theta_b$ the blue cylinder. That is the cylinder in direction $(2n+2, 2n+1)$ with the blue saddle connection as the upper boundary. 
\end{enumerate}
These cylinders have the combinatorial length $f_{\Psi_r} = 2n, f_{\Psi_g} = 1, f_{\Theta_m} = 2n+1, f_{\Theta_b} = 1.$ In table \labelcref{intersectionNumberOdd} we computed all necessary intersection numbers. Note that $c_{\Psi_r}= c_{\Theta_m} = c_{\Theta_b} = 1$ and $c_{\Psi_g} = 2.$

It holds
\begin{align*}
\Theta_m &= (2n+1)X_2 + (2n+1)2nX_1 + 2nY_2 + (2n+1)Y_1\\
\Theta_b &= X_2 + 2nX_1 + Y_2\\
\Psi_r &= (2n-1)X_2 + (2(n-1)(2n+1) + 4) X_1 + (2n+1)Y_2 + (2n-1)Y_1\\
\Psi_g &= X_2 + (2n-1)X_1 + Y_2 + 2Y_1
\end{align*}
and
\begin{align*}
D_\Psi( X ) &= X + c_{\Psi_r}f_{\Psi_g}\Omega(X, \Psi_r)\Psi_r
				+ c_{\Psi_g}f_{\Psi_r}\Omega(X, \Psi_g)\Psi_g\\
			&= X -2 \Psi_r + 2\cdot2n\Psi_g\\
			&= X + 4nX_2 + 4n(2n-1)X_1 + 4nY_2 + 8nY_1 \\
			&\quad -2(2n-1)X_2 -2(2(n-1)(2n+1)+4)X_1 - 2(2n+1)Y_2- 2(2n-1)Y_1\\
			&= X + 2X_2 -4X_1 -2Y_2 + (4n+2)Y_1 = 3X -2Y\\
D_\Psi( Y ) &= Y + c_{\Psi_r}f_{\Psi_g}\Omega(Y, \Psi_r)\Psi_r
				+ c_{\Psi_g}f_{\Psi_r}\Omega(Y, \Psi_g)\Psi_g\\	
			&= Y - 2\Psi_r + 4n\Psi_g\\
			&= 2X- Y	
\end{align*}
and
\begin{align*}
D_\Theta(X) & =  X + c_{\Theta_b}f_{\Theta_m}\Omega(X, \Theta_b)\Theta_b
				+ c_{\Theta_m}f_{\Theta_b}\Omega(X, \Theta_m)\Theta_m\\
			&= X - (2n+1)\Theta_b + \Theta_m\\
			&= X - (2n+1)X_2 - (2n+1)2nX_1 - (2n+1)Y_2\\
			& \quad + (2n+1)X_2 + (2n+1)2nX_1 + 2nY_2 + (2n+1)Y_1\\
			&= X - Y_2 + (2n+1)Y_1 = X-Y\\
D_\Theta(X) & =  Y + c_{\Theta_b}f_{\Theta_m}\Omega(Y, \Theta_b)\Theta_b
				+ c_{\Theta_m}f_{\Theta_b}\Omega(Y, \Theta_m)\Theta_m= Y			
\end{align*}
So we have 
\begin{align*}
D_\Psi = \begin{pmatrix}
3 & 2\\-2 & -1
\end{pmatrix}
\text{  and  }
D_\Theta =
\begin{pmatrix}
1 & 0 \\ -1 & 1
\end{pmatrix}
\end{align*}
\newpage
\begin{figure}
\centering
\begin{minipage}{.5\linewidth}
\begin{tabular}{c | c | c } 
$\Omega(X_i, \Theta_j)$ & $\Theta_m$ & $ \Theta_b$\\
\hline
$X_2$ & $4n+1$ &  $1$\\
\hline
$X_1$ & $2n$ &  $1$\\
\hline
$X$ & $1$ &  $-1$\\
\end{tabular}

\begin{tabular}{c | c | c } 
$\Omega(X_i, \Psi_j)$ & $\Psi_r$ & $ \Psi_g$\\
\hline
$X_2$ & $4n$ &  $3$\\
\hline
$X_1$ & $2n+1$ &  $1$\\
\hline
$X$ & $-2$ &  $1$\\
\end{tabular}
\end{minipage}
\qquad
\begin{minipage}{.5\linewidth}
\begin{tabular}{c | c | c } 
$\Omega(Y_i, \Theta_j)$ & $\Theta_m$ & $ \Theta_b$\\
\hline
$Y_1$ & $2n+1$ &  $1$\\
\hline
$Y_2$ & $(2n+1)^2$ &  $2n+1$\\
\hline
$Y$ & $0$ &  $0$\\
\end{tabular}

\begin{tabular}{c | c | c } 
$\Omega(Y_i, \Psi_j)$ & $\Psi_r$ & $\Psi_g$\\
\hline
$Y_1$ & $2n-1$ &  $1$\\
\hline
$Y_2$ & $(2n-1)(2n+1) + 2$ &  $2n$\\
\hline
$Y$ & $-2$ &  $1$\\
\end{tabular}
\end{minipage}
\caption{Intersection numbers of cylinder middles and the homology}
\label{intersectionNumberOdd}
\end{figure}

\section{Intersection number and combinatorial length}\label{comp}
Let $\mathcal{O} = \text{L-Origami}(2, 2n) = O((1\dots 2n), (1 \textbf{ } (2n+1)))$ be an origami and $\pi\colon \mathcal{O} \to E$ the corresponding covering of the standard torus $E = \mathbb{C} / \mathbb{Z}^2$ (see e.g. Figure \labelcref{ZylinderDecom2}). Let $\sigma$ be the cycle $(2\dots 2n \textbf{ } 1 \textbf{ } (2n+1))$. In order to compute the intersection numbers, we introduce some special lattice points. This was inspired by \cite{MR2753950}.
\begin{defn}
\begin{enumerate}
\item An   $(n, n+1)$\textbf{-lattice point} is a point $x \in \mathcal{O}$, such that $\pi(x) =(\frac{a}{n+1}, \frac{b}{n}) $ with $a, b \in \mathbb{Z}$.
\item A \textbf{horizontal}  $(n, n+1)$\textbf{-lattice point} is a point $x \in \mathcal{O}$, such that $\pi(x) =(\frac{a}{n+1}, 0) $ with $a \in \mathbb{Z}$.
\item A \textbf{vertical}  $(n, n+1)$\textbf{-lattice point} is a point $x \in \mathcal{O}$, such that $\pi(x) =(0, \frac{a}{n}) $ with $a \in \mathbb{Z}$.
\end{enumerate}
\end{defn}
We notice that the geodesic line in direction $(n, n+1)$ through an  $(n, n+1)$-lattice point meets again an  $(n, n+1)$-lattice point, whence it meets the horizontal edge of a square.
%\begin{lem}\label{geoverlauf}
%Let $x$ be a $(n, n+1)$-lattice point, which is no singularity. Let $\gamma_t$ be the geodesic flow through $x$ in direction $(n, n+1)$. Then $\gamma_t$ runs cyclic through the  $(n, n+1)$-lattice points at the lower edge of the squares in order $\sigma$, until it meets a singularity or closes. Furthermore, if $\pi(x) = (\frac{a}{n+1}, 0 )$ and $x$ is at the lower edge of the quare $i$, then $\gamma_t$ meets the point $y$ with $\pi(y) = (\frac{a}{n+1}, 0 )$ and $y$ lies at the upper edge of square $\sigma^{(n+1)}(i),$ if it meets no singularity. 
%\end{lem}
\begin{lem}\label{geoverlauf}
Let $x$ be a horizontal $(n, n+1)$-lattice point at the lower edge of square $i$, which is no singularity and let $\gamma$ be the geodesic line through $x$ in direction $(n, n+1).$ 
\begin{enumerate}
\item[(a)] Let $y$ be the point of $\gamma,$ where it meets the lower edge of a square the next time, in which $y$ is no singularity. Then $y$ lies at the lower edge of square $\sigma(i).$
\item[(b)] If $\pi(x) = (\frac{a}{n+1}, 0 )$, then $\gamma$ meets the point $y$ with $\pi(y) = (\frac{a}{n+1}, 0 )$ and $y$ lies at the upper edge of square $\sigma^{(n+1)}(i),$ if it meets no singularity.
\end{enumerate}
 
\end{lem}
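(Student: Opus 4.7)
The plan is to parametrize the geodesic as $\gamma(t) = x + t(n, n+1)$ and trace it explicitly through the squares of $\mathcal{O}$, using the horizontal and vertical permutations $\sigma_h, \sigma_v$ of $\mathcal{O}$, so that $\sigma = \sigma_v\sigma_h$ records one application of ``cross the right edge, then the upper edge''.

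For (a), write $x = (a/(n+1), 0)$ in local coordinates of square $i$. Inside $i$ the geodesic reaches the upper edge at time $1/(n+1)$, at horizontal position $n/(n+1)$, and the right edge at time $(n+1-a)/(n(n+1))$, at height $(n+1-a)/n$. Comparing these two times gives a dichotomy. If $a = 0$, the geodesic exits directly through the upper edge of $i$ at a non-corner lattice point and lands on the lower edge of $\sigma_v(i)$. A direct traversal of the unique $6\pi$-singular orbit of $L(2, 2n)$ --- a twelve-corner cycle that visits, among others, the bottom-left corners of squares $1$, $2$, and $2n+1$ --- shows that non-singularity of $x$ forces $i \notin \{1, 2, 2n+1\}$, whence $\sigma_h(i) = i$ and $\sigma_v(i) = \sigma(i)$, as required. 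If $a \geq 1$, the geodesic first crosses the right edge of $i$ into $\sigma_h(i)$ at height $(n+1-a)/n$, and inside $\sigma_h(i)$ reaches the upper edge at horizontal position $(a-1)/(n+1) < 1$, landing on the lower edge of $\sigma_v(\sigma_h(i)) = \sigma(i)$. The borderline $a = 1$, where the two exit times coincide at the top-right corner of $i$, is covered by the non-singularity hypothesis on $y$: either the corner is singular and the hypothesis excludes it, or both branches of the corner lead to the same target square $\sigma(i)$.

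Part (b) follows by iteration. Each application of (a) sends the torus $x$-coordinate $a/(n+1)$ to $((a-1) \bmod (n+1))/(n+1)$ (the wrap $0 \to n$ is precisely $-1 \bmod (n+1)$) and advances the square index by $\sigma$. Provided no intermediate singularity is encountered, after $n+1$ iterations the torus $x$-coordinate returns to $a/(n+1)$ and the square becomes $\sigma^{n+1}(i)$; the resulting $y$ is the first return of $\gamma$ to the fiber $\pi^{-1}(a/(n+1), 0)$ and lies on the horizontal edge bounding $\sigma^{n+1}(i)$ from above, as stated.

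The principal obstacle is the corner analysis: the non-singularity hypothesis must sharply exclude exactly the starts ($a = 0$ with $i \in \{1, 2, 2n+1\}$) and the intermediate corner hits ($a = 1$ with the top-right corner of $i$ singular) that belong to the $6\pi$-orbit. Once that orbit is enumerated by alternately applying $\sigma_h$ and $\sigma_v$ for twelve steps to a chosen corner and checking that we return to it, the borderline cases reduce to routine bookkeeping and the explicit geodesic calculation above goes through unchanged.
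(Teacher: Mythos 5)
Your proof is correct and follows essentially the same route as the paper: you trace the geodesic square by square through the edge identifications, observe that each passage to the next lower edge shifts the torus coordinate $\frac{a}{n+1}\mapsto\frac{a-1}{n+1}\bmod 1$ while advancing the square index by $\sigma=\sigma_v\sigma_h$, and iterate $n+1$ times for part (b). The only difference is organizational — you split cases by the lattice coordinate $a$ and verify the singular corners via the twelve-corner vertex class, whereas the paper splits by the square index $i$ and treats squares $1$ and $2n+1$ separately — and your version is the more explicit of the two.
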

\begin{proof}
\begin{enumerate}
\item[(a)]  Let $2\leq i \leq 2n.$ Then $\gamma$ leaves square $i$, when it reaches the upper edge. This is the lower  edge of square $\sigma(i).$ If $i=1$ then the first coordinate of $\pi(x)> \frac{1}{n+1},$ since $x$ and $y$ are no singularities. Then $\gamma$ leaves square 1 over the right edge and reaches the lower edge of square $2n+1$ = $\sigma(1).$ The case $i =2n+1$ is treated similarly.
\item[(b)] If  $\pi(x) = (\frac{a}{n+1}, 0)$ then $\pi(\gamma(t) = (\frac{a-1}{n+1} \mod 1, 0)$, when $\gamma$ meets the edge of a square the next time. Then we can apply part (a) $n+1$ times. 
\end{enumerate}

\end{proof}

\begin{lem}\label{latpoints}
Each geodesic line $\gamma$ through an $(n, n+1)$-lattice point $x$ in direction $(n, n+1)$ defines a saddle connection.
\end{lem}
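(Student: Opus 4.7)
The plan is to show that every $(n,n+1)$-lattice point on $\mathcal{O}=L(2,2n)$ lies on the boundary of a cylinder of the $(n,n+1)$-direction cylinder decomposition, hence on a saddle connection. The argument exploits the coincidence of two lattice-like structures in the direction transverse to $(n,n+1)$: the spacing of the cylinder boundaries and the spacing of the lattice points.

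First I would compute that every cylinder in the $(n,n+1)$-direction decomposition of $\mathcal{O}$ has the same height $h:=1/\sqrt{n^2+(n+1)^2}$. This follows because $\mathcal{O}$ is a $(2n+1)$-fold cover of $E$, and on $E$ the direction $(n,n+1)$ gives a single cylinder of width $\sqrt{n^2+(n+1)^2}$ and height $h$; every $(n,n+1)$-cylinder on $\mathcal{O}$ is a connected component of $\pi^{-1}$ of this torus cylinder, so has the same height. (This is also consistent with $c_{\Theta_g}=c_{\Theta_r}=1$ recorded earlier.)

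Next I would check that for any two $(n,n+1)$-lattice points $x_1,x_2$ with $\pi(x_i)=(a_i/(n+1),b_i/n)$, the perpendicular distance from $x_2$ to the line through $x_1$ in direction $(n,n+1)$ equals
\[ \frac{|(b_2-b_1)-(a_2-a_1)|}{\sqrt{n^2+(n+1)^2}}, \]
which is an integer multiple of $h$. Now the singularity of $\mathcal{O}$ itself sits at a square corner, making it an $(n,n+1)$-lattice point, and since $\mathcal{O}$ lies in $\mathcal{H}(2)$ it is the unique cone point; hence every saddle connection, and therefore every cylinder boundary component in direction $(n,n+1)$, passes through it. Consequently, within any cylinder $C$ both boundary components are at transverse distances in $h\mathbb{Z}$ from the singularity, so any lattice point in $C$ is also at a transverse distance in $h\mathbb{Z}$; reducing modulo the cylinder height $h$ forces the lattice point to lie at transverse position $0$, i.e., on a boundary component of $C$. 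Therefore the geodesic through such a lattice point in direction $(n,n+1)$ coincides with a cylinder boundary component, which is (a concatenation of) saddle connection(s).

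The main subtlety I expect is that the transverse coordinate is not globally single-valued on $\mathcal{O}$; the argument has to be carried out one cylinder at a time, with the singularity serving as a common reference point lying on every cylinder boundary. Lemma \ref{geoverlauf} can be invoked to track how the sequence of lattice points visited by $\gamma$ progresses through adjacent cylinders, confirming that the local transverse-distance computation is consistent along the whole orbit.
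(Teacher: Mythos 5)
Your argument has a genuine gap at its first step, and it is the step that carries all the content. You assert that every cylinder of $\mathcal{O}$ in direction $(n,n+1)$ is a connected component of the preimage under $\pi$ of the open torus cylinder, hence has height $h$. But a cylinder of a translation surface is a \emph{maximal} open annulus swept out by parallel closed geodesics; a priori it can be a union of several such components, glued along those components of the preimage of the torus core geodesic which happen to miss the singularity. Since the closed geodesic of $E$ through $(0,0)$ in direction $(n,n+1)$ contains every $(n,n+1)$-lattice point of $E$, its preimage is exactly the union of the geodesics through the $(n,n+1)$-lattice points of $\mathcal{O}$, so the assertion ``every cylinder has height $h$'' is logically equivalent to the lemma you are proving: your step (1) is circular. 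It is also false as a general principle about branched covers of the torus, as the same origami shows in the horizontal direction: the cylinder formed by rows $2,\dots,2n$ of $L(2,2n)$ has height $2n-1$, not $1$, and the horizontal geodesics at integer heights inside it pass through preimages of $(0,0)$ without being saddle connections. The appeal to $c_{\Theta_g}=c_{\Theta_r}=1$ does not help either, since that is recorded in the paper only after the cylinder decomposition in direction $(n,n+1)$ is already known; nor can an area count rescue the claim, because $\sum_i m_i k_i = 2n+1$ (combinatorial widths times combinatorial heights) admits solutions with some $k_i>1$. The transverse-distance computation and the reduction modulo $h$ in the remainder of your proof are correct, but they inherit the circularity.

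What is missing is an input that uses how the $2n+1$ squares are actually glued, and this is exactly what the paper's proof supplies: starting from a lattice point over $(0,0)$ on the lower edge of square $i$, each return of the geodesic to a point over $(0,0)$ advances the square index by $\sigma^{n+1}$ (\Cref{geoverlauf}), and since $\gcd(n+1,2n+1)=1$ the powers of $\sigma^{n+1}$ exhaust those of $\sigma$, so the geodesic reaches the lower-left corner of every square and in particular hits a singularity. If you wish to keep the cylinder-height formulation, you would have to prove the height claim by an argument of this combinatorial kind, at which point the detour through cylinders is no longer needed.
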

\begin{proof}
Let $x = \gamma(0)$ be no singularity. We can assume, that $x$ is a horizontal $(n, n+1)$-lattice point at the lower edge of some square $i$, because $\gamma$ meets one, the next time it crosses an edge of a square. If $\pi(x) = (\frac{a}{n+1}, 0)$ then then $\pi(\gamma(t)) = (\frac{a-1}{n+1} \mod 1, 0)$, when $\gamma$ meets the edge of a square the next time. So we can assume, that $x$ lies over $(0,0).$ 

 The numbers $n+1$ and $2n + 1 = n + (n+1)$ are coprime. (A common  divisor of $n+1$ and $2n +1$ would be a common divisor of $n+1$ and $n = (2n +1 - (n+1))$.) So we get integers $a, b \in \mathbb{Z}$ with $a (n+1) + b(2n+1) = 1.$ So by \Cref{geoverlauf} either $\gamma$ meets the lower left point of square $\sigma^{a (n+1)}(i) = \sigma(i)$ or it meets a singularity before. So $\gamma$ meets a singularity at least, when the lower left point of square $\sigma^k(i)$ is a singularity.
\end{proof}

Since $\mathcal{O}$ has one singularity of degree 3, there are 3 saddle connections in direction $(n, n+1)$. 
\begin{enumerate}
\item $r_1$ starts in the lower left vertex of square 1 and ends in the upper right vertex of $2n + 1.$
\item $g$ starts in the lower left vertex of square $2n+1$ and ends in the upper right vertex of $1.$
\item Hence the last saddle connection $r_2$ starts at the lower left vertex of square 2 and ends at the upper right vertex of square $2n.$
\end{enumerate}

We will now show, that  the red saddle connections $r_1$ and $r_2$ are the upper boundary of a cylinder, namely $\Theta_r$ and the green saddle connection $g$ is the upper boundary of a cylinder, namely $\Theta_g.$ For this we use separatrix diagrams. There is a nice introduction to separatrix diagrams in \cite{MR2000471}. We will use just the ribbon graph structure of the separatrix diagram (without the pairings of the boundary components). The separatrix diagram of the origami $L(2, 2n)$ with saddle connections in direction $(n, n+1)$ is shown in Figure \labelcref{separatrix}. The cyclic order of the vertex is as drawn. 

\begin{figure}
\begin{center}
\begin{tikzpicture}
\draw[->, color = red] (0,0) -- (0.58 , 1.15);
\draw[ color = red] (0,0) -- (-0.58 , 1.15);
\draw[->, color = red] (0.58 , 1.15) to [out= 54.7, in = 125.3] (-0.58 , 1.15);
\node at (0,1)[above] {$r_1$};
\node at (0,2)[above] {$r_2$}; 
\node at (0,-1)[below] {$g$}; 
\draw[ color = red] (0,0) -- (2,0);
\draw[<-, color = red] (-2,0) -- (0,0);
\draw[<-, color = red] (2,0) to [out = 0, in = 0] (0,2);
\draw[ color = red] (0,2) to [out = 180, in = 180](-2,0); 
\draw[->, color = green] (0,0) -- (0.58 , -1.15);
\draw[ color = green] (0,0) -- (-0.58 , -1.15);
\draw[->, color = green] (0.58 , -1.15) to [out= 305.3, in = 234.7] (-0.58 , -1.15); 
\draw[fill] (0,0) circle (1pt);
\end{tikzpicture}
\end{center}
\caption{The separatrix diagram of L(2, 2n+1) for saddle connections in direction (n, n+1)}
\label{separatrix}
\end{figure}
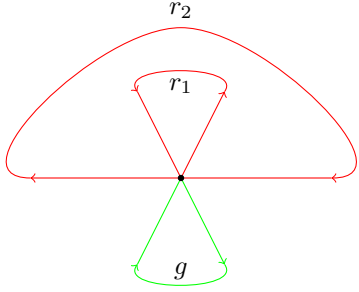

We can find the cylinders with $r_1$ as a part of the upper boundary as follows: We follow the edge $r_1$ until we reach a vertex again. Then we follow the the next edge in the reserved cyclic order of the vertex. This is the edge $r_2$. Then we follow the next edge in the reserved cyclic order, which is again $r_1$ with which we started. So we have found the upper boundary of a cylinder.
With the same procedure we see that $g$ is the upper boundary of a cylinder.

We show now, why this procedure gives the boundary's of the cylinders. We follow a saddle connection until we reach a singularity (which corresponds to a vertex in the separatrix diagram). If we want to know the boundary of the cylinder adjacent to our starting saddle connection, we follow a small path from the saddle connection anti clockwise around the singularity, until we reach a saddle connection again. In the separatrix diagram this is the next edge in the reserved cyclic order. We found the entire upper boundary of the cylinder, when we reach the starting saddle connection again. 

Let us count the intersection numbers of the saddle connections and our chosen basis of the homology states in Figure \labelcref{intersectionNumberEven}. We can represent an element of our basis of the homology by a horizontal or vertical curve $c$ through $(n, n+1)$-lattice points. Then the saddle connection meets $c$ exactly at the $(n, n+1)$-lattice points. 

We will first compute the intersection numbers with  $g$ and conclude the intersection numbers with $\Theta_r$ from this. 

Let us compute the intersection number $\Omega(Y_2, \Theta_g)$. We represent $Y_2$ by the left border of the origami. The saddle connection $g$ runs trough square $2n+1$ to square 1 at the point that lies over $(0, \frac{1}{n})$ on the right edge of square 1 and then up  through the squares $2,\dots, n$, while it meets the left border each of these square once. For sure $n$ this is the left upper vertex which is equal to the right upper vertex. Next $\Theta_g$ runs through square $n+1$ without hitting its vertical edge. Finally $\Theta_g$ runs through squares $n+2, \dots, 2n$, hitting the left edge of each of them once, until it reaches square 1, where it runs in a singularity. During this $\Theta_g$ meets the left border $n-1$ times. In total we have $\Omega(Y_2, \Theta_g) = -(n+ (n-1)) = -(2n-1).$

The other intersection numbers with $\Theta_g$ can be computed similar.

By  \Cref{latpoints}, each   $(n, n+1)$-lattice point lies on a saddle connection. So any lattice point, which does not lie on the green saddle connection, meets a red one. So we have
\begin{align*}
\Omega(X_1, r) = (n + 1) - 1 = n, 
\end{align*}
because $X_1$ contains $n+1$ lattice points, from which 1 is green.
Analogously 
\begin{align*}
\Omega(X_2, r) &= 2(n + 1) - 3 = 2n -1\\
\Omega(Y_1, r) &= n - 1\\
\Omega(Y_2, r) &= 2nn - (2n-1) = 2n(n-1)+1\\
\end{align*}
Finally, we determine the combinatorial length of both cylinders.
\begin{lem} \begin{enumerate}
\item[(a)] The combinatorial length of $g$ is 2.
\item[(b)] The combinatorial length of $r$ is $n-1.$
\end{enumerate} 
\end{lem}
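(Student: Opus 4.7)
The plan is to identify the combinatorial length of a combinatorial-height-$1$ cylinder with the number of squares of $\mathcal{O}$ it contains, and then to count squares using the $\sigma$-cycle. The identification comes from a straightforward area comparison: a cylinder $C$ has area equal to the number of squares it covers, and when its perpendicular width is exactly one $\pi$-fibre (combinatorial height $1$), its circumference comprises $f_C$ copies of the primitive $(n,n+1)$-loop in $E$, so $f_C$ coincides with the number of squares in $C$.

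For part (a), I would trace a mid curve $\gamma_g$ of $\Theta_g$ drawn just above the green saddle connection $g$, iterating \Cref{geoverlauf}(a) to pass from the bottom edge of square $i$ to the bottom edge of square $\sigma(i)$ at each crossing. Combined with \Cref{geoverlauf}(b), which guarantees the first return to the starting horizontal $\pi$-fibre after $n+1$ such crossings, and with the fact that $g$ bounds $\Theta_g$ along its full upper boundary, this shows that $\gamma_g$ enters only the two squares $2n+1$ and $1$ (the ones framing $g$) before closing up. Hence $f_{\Theta_g} = 2$.

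For part (b), the same square-counting strategy applied to $\Theta_r$ yields the conclusion. The cleanest route is to use that $\Theta_g$ and $\Theta_r$ exhaust the $2n+1$ squares of $\mathcal{O}$ with disjoint interiors, so the square count for $\Theta_r$ is the complement of that in part (a); alternatively one traces either of the red saddle connections $r_1, r_2$ through the $\sigma$-cycle and lists the squares visited by the corresponding mid curve of $\Theta_r$. In both approaches the endpoints of the red saddle connections (lower-left of square $1$ and of square $2$, respectively) together with the cycle structure of $\sigma$ determine exactly which squares belong to $\Theta_r$.

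The principal delicacy is the geometric passage from the combinatorial statement ``multiplicity of $\pi \circ \gamma_C$'' to the concrete quantity ``number of squares in $C$'' for height-$1$ cylinders; once this identification is in hand, the remainder of the proof is pure bookkeeping with the permutation $\sigma$ and the termination analysis for saddle connections already supplied by \Cref{geoverlauf}.
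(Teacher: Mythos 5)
Your guiding identification---that the combinatorial length of one of these cylinders equals ``the number of squares it contains''---is not justified as stated, and the way you deploy it in part (a) is where the argument genuinely breaks. The cylinders $\Theta_g$ and $\Theta_r$ are thin diagonal strips (see Figure \labelcref{ZylinderDecom2}); neither is a union of unit squares, and the mid curve of $\Theta_g$ visibly traverses portions of most of the $2n+1$ squares, so the claim that it ``enters only the two squares $2n+1$ and $1$'' is false, and in any case is not the quantity that computes $f_{\Theta_g}$. The salvageable version of your area comparison is this: in direction $(n,n+1)$ the torus $E$ decomposes into a single cylinder $E_0$ of circumference $\lvert(n,n+1)\rvert$ and height $1/\lvert(n,n+1)\rvert$; each cylinder $C$ of $\mathcal{O}$ is a connected finite cover of $E_0$ of degree $f_C$, and a connected finite cover of a flat cylinder has the same height, so $\operatorname{area}(C)=f_C$ and hence $f_C=\#\bigl(\pi^{-1}(p)\cap C\bigr)$ for a generic $p\in E$. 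But evaluating that count for $\Theta_g$ forces you to determine which preimages of a point $p$ just below the primitive geodesic through $(0,0)$ lie just below the upper boundary $g$, i.e.\ to count the lattice points over $(0,0)$ lying on $g$. That is exactly the paper's proof: $g$ meets the fibre over $(0,0)$ at the upper right vertices of squares $n$ and $1$ and nowhere else, giving $f_{\Theta_g}=2$. Your appeal to \Cref{geoverlauf} does not substitute for this: returning to the starting horizontal fibre after $n+1$ crossings tells you when the curve revisits the same point of $E$, not how many squares it passes through, and the two notions disagree here.

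Your complement argument for (b) is sound once (a) and the identification above are in place, and it coincides with the paper's: by \Cref{latpoints} every lattice point over $(0,0)$ lies on one of the three saddle connections, so the red boundary accounts for the remaining $2n+1-2=2n-1$ of them, i.e.\ $f_{\Theta_r}=2n-1$. (This is the value used throughout the rest of the paper; the ``$n-1$'' in the statement of the lemma is a typo.)
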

\begin{proof}
We note that the combinatorial length of a curve $\gamma\colon [0,1]\to X$ of a covering $\pi \colon X \to Y$ is the multiplicity of the curve $\pi(\gamma).$ Since the geodesic line is determined by the direction and a point of the geodesic line, the multiplicity of the curve $\gamma$ is the number  $\#\{t \in (0,1], \pi(\gamma(t)) = \pi(\gamma(0))\}.$
\begin{enumerate}
\item[(a)] The green saddle connection meets a point $x$ with $\pi(x) = (0,0)$ at the upper right vertex of square $n$ and at the upper  right vertex of the square 1. Hence the combinatorial length is 2.
\item[(b)] By Lemma \labelcref{latpoints} the red saddle connection meets the upper right vertex of each square, which meets not the green saddle connection. Hence the combinatorial length is $2n +1 - 2 = 2n -1.$
\end{enumerate}
\end{proof}

\bibliography{lorigamis}
\bibliographystyle{unsrtdin}

%%% -------------------------------------------------------------------
%%% -------------------------------------------------------------------
%%% This is where we create the bibliography.

\end{document}